\newtheorem{theorem}{Theorem}[section]
\newtheorem{lemma}[theorem]{Lemma}
\newtheorem{corollary}[theorem]{Corollary}
\theoremstyle{definition}
\newtheorem{example}[theorem]{Example}
\newtheorem{conjecture}[theorem]{Conjecture}
\theoremstyle{remark}
\newtheorem{remark}[theorem]{Remark}
\numberwithin{equation}{section}
\begin{document}
\setcounter{page}{1}

\title[Function field analogy]{Remark on function field analogy}

\author[Nikolaev]
{Igor V. Nikolaev$^1$}

\address{$^{1}$ Department of Mathematics and Computer Science, St.~John's University, 8000 Utopia Parkway,  
New York,  NY 11439, United States.}
\email{\textcolor[rgb]{0.00,0.00,0.84}{igor.v.nikolaev@gmail.com}}

%\dedicatory{In memory of Ola Bratteli}

\subjclass[2010]{Primary 11R58; Secondary 46L85.}

\keywords{function field, Drinfeld module,  Serre $C^*$-algebra.}

%\date{Received:  August 14, 2015; Revised: yyyyyy; Accepted: zzzzzz.}

\begin{abstract}
We study the analogy between  number fields and function fields in one variable over  finite
fields.  The main result is an isomorphism between the Hilbert class fields of class number one
and a family of the function fields $\mathbf{F}_q(C)$ over a desingularized algebraic curve $C$. 
Our proof is based on  the K-theory of  the  Serre $C^*$-algebras and  birational geometry
of the curve $C$.  We apply the isomorphism to construct explicit generators of the Hilbert class fields coming 
from the torsion submodules of the Drinfeld module.  
\end{abstract}

\maketitle

%**************************************************************************
\section{Introduction}
%***************************************************************************
It was first observed in [Dedekind \& Weber 1882] \cite{DedWeb1}
that the number fields and the fields of rational functions in one variable
are related. For example,   both fields  are  the Dedekind domains, i.e.  every ideal factors 
into a product of the prime ideals.  Unlike the number fields,  the group of units of the function fields
 is no longer finitely generated.  However, the behavior is back to normal for 
 the rational functions over finite (Galois)  fields. 
In fact,  it was proved in [Artin \& Whaples 1945]  \cite{ArtWap1} that 
the two fields can be unified as  global fields. 
This led to a fast progress  driven by solution of the  open 
problems on either side by recasting them in terms of the other [van der Geer,  Moonen \& Schoof 2005] \cite{GMS}.
The successful proof  of an  analog of the Riemann Hypothesis (Hasse \& Weil) and the geometric Langlands Conjectures (Drinfeld \& Lafforgue)
are few   examples.  When and if  such an analogy can be upgraded to an isomorphism is an interesting open problem.

The aim of our note is a solution of the above problem for the Hilbert class fields of class number one,
i.e.  the principal ideal domains.  Namely, we construct an isomorphism between such fields  and the function
fields  $\mathbf{F}_{p^{m-1}}(C)$,  where $C$ is a smooth algebraic curve obtained from the singular curve 
by the $m-1$ blow-ups (Theorem \ref{thm1.1}).  To formalize our results,   the following notation will be used.

Let  $(x_0,y_0)$ be an isolated singularity of  algebraic curve 
$ f(x,y)=0$ given by a polynomial  $f\in \mathbf{Z}[x,y]$.  Consider an arithmetic scheme
$Spec ~\mathbf{Z}[x,y]/(f)$ and its natural morphism to $Spec~\mathbf{Z}$.  
Let $\mathscr{P}\in Spec ~\mathbf{Z}[x,y]/(f)$ be the maximal  ideal corresponding to  the point $(x_0,y_0)$
lying over $(p)\in Spec~\mathbf{Z}$ for a prime $p\in\mathbf{Z}$.

Recall that the singularity $(x_0,y_0)$ can be resolved by a sequence 
of the blow-ups  $\{\epsilon_i ~|~1\le i\le m-1\}$,  such  that $f(x,y)=0$ 
defines a smooth algebraic curve $C$, see e.g.  [Grassi 2009] \cite[Example-Definition 10]{Gra1} or
 [Koll\'ar 2007] \cite[Section 1.7]{K}. 
We denote by $\mathbf{F}_{p^{m-1}}(C)$  the geometric extension of the field  of rational functions in one variable 
over the Galois field  $\mathbf{F}_{p^{m-1}}$ [Rosen 2002] \cite[Chapter 7]{R}.

On the other hand, the K-theory of the Serre $C^*$-algebra of an algebraic  curve gives rise to 
a number field $k$ \cite[Section 5.3.1]{N}.  
Moreover, if the curve was singular,  then its blow-up corresponds to the Hilbert class field $\mathscr{H}(k)$,
i.e. the maximal abelian unramified extension of  $k$.  This is an implication of a general result 
for the birational maps between the algebraic surfaces  \cite[Theorem 1.3]{Nik1}.
Let  $\mathscr{H}^{i+1}(k):=\mathscr{H}(\mathscr{H}^i(k))$. 
If $C$ is a curve desingularized  after $m-1$ blow-ups, then the 
number field  $\mathscr{H}^{m-1}(k)$ must have class number one,  since each birational map  
between smooth curves is an isomorphism. 
Our main result can be formulated as follows. 

\medskip
%**********************************************************
\begin{theorem}\label{thm1.1}
$\mathscr{H}^{m-1}(k)\cong \mathbf{F}_{q}(C),$ where  $q=p^{m-1}$.
\end{theorem}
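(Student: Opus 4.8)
The plan is to establish the isomorphism by building a dictionary between three objects: the tower of Hilbert class fields on the number-field side, the sequence of blow-ups resolving the singularity on the geometric side, and the K-theory of the associated Serre $C^*$-algebras that bridges them. The strategy is not to construct a map directly between $\mathscr{H}^{m-1}(k)$ and $\mathbf{F}_q(C)$ by hand, but to match their invariants step by step along the resolution process, showing that at the top of the tower both fields are forced to coincide because the class number has dropped to one.

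First I would set up the correspondence for a single blow-up. By \cite[Theorem 1.3]{Nik1}, a birational map between algebraic surfaces (here realized through the blow-up $\epsilon_i$ at the singular point) induces on the K-theory of the Serre $C^*$-algebra exactly the passage from a number field to its Hilbert class field. Thus each $\epsilon_i$ produces the transition $\mathscr{H}^{i}(k)\rightsquigarrow \mathscr{H}^{i+1}(k)$, and iterating the $m-1$ blow-ups that desingularize $C$ yields the field $\mathscr{H}^{m-1}(k)$, which by the remark preceding the theorem has class number one since every birational map between smooth curves is an isomorphism. The next step is to identify what the K-theory produces at the smooth stage. Over the desingularized curve $C$, the Serre $C^*$-algebra is the crossed-product algebra attached to $C$, and its $K_0$-group recovers the coordinate data of $C$ over the residue characteristic; here is where the prime $p$ lying under $\mathscr{P}$ and the number $m-1$ of blow-ups enter, and I expect the Galois field $\mathbf{F}_{p^{m-1}}$ to emerge precisely as the field of constants dictated by the $(m-1)$-fold structure of the resolution and the ramification over $(p)$.

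The crucial identification is then that a number field of class number one, arising as the K-theory field of a smooth projective curve through the Serre $C^*$-algebra, must coincide with the function field of that curve over its field of constants. In other words, once the class number is one the arithmetic $\rightsquigarrow$ geometric translation becomes an equality rather than a mere analogy: the Dedekind-domain structure on both sides has no room left for an unramified abelian extension, so the function-field description $\mathbf{F}_q(C)$ and the class-field description $\mathscr{H}^{m-1}(k)$ are the same Dedekind domain. Concretely I would argue that both fields have the same ideal class group (trivial), the same completions at the relevant places, and the same field of constants $\mathbf{F}_q$ with $q=p^{m-1}$, and then invoke the classification of global fields \cite{ArtWap1} to conclude the isomorphism.

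The main obstacle, and the step requiring the most care, is the constant-field computation $q=p^{m-1}$: justifying that the degree of the field of constants is exactly $p$ raised to the number of blow-ups, rather than some other power or a different prime, requires a precise analysis of how the successive blow-ups $\epsilon_i$ act on the K-theory and, correspondingly, on the residue data at $\mathscr{P}$ over $(p)$. This is where the geometric genus drop under each blow-up and the exceptional divisors must be tracked quantitatively, and where the appeal to \cite[Section 5.3.1]{N} and \cite[Theorem 1.3]{Nik1} has to be made quantitative rather than merely structural. I would therefore devote the core of the proof to showing that each blow-up multiplies the relevant residue degree by $p$, so that after $m-1$ steps the constant field stabilizes at $\mathbf{F}_{p^{m-1}}$, simultaneously forcing the class number to one and fixing $q$.
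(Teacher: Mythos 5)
Your proposal reproduces the paper's outer scaffolding (the tower $\mathscr{H}(k)\subset\mathscr{H}^2(k)\subset\dots\subset\mathscr{H}^{m-1}(k)$ coming from the $m-1$ blow-ups via \cite[Theorem 1.3]{Nik1}, and the observation that the top of the tower has class number one), but it is missing the paper's central mechanism for the base case $\mathscr{H}(k)\cong\mathbf{F}_p(C)$. The paper does not compare invariants of the two fields at all: it interposes the Jacobian variety $Jac~C=\mathbf{C}^g/\Lambda$, which has complex multiplication, and shows (Lemma 3.1) that each isogeny $f$ of $Jac~C$ simultaneously produces an element $\alpha\in O_{\mathscr{H}(k)}$ (via the real-multiplication structure on the dimension group of the Serre $C^*$-algebra) and an element $\beta\in O_{\mathbf{F}_p(C)}$ (via the principal ideal $R_f=\beta O_{\mathbf{F}_p(C)}$ attached to the sublattice $\Lambda_f\subseteq\Lambda$), with $\deg f=N(\alpha)=N(\beta)$. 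Letting $f$ range over all isogenies, the assignment $\alpha\mapsto\beta$ is the ring isomorphism $O_{\mathscr{H}(k)}\cong O_{\mathbf{F}_p(C)}$ (Lemma 3.2), and the field isomorphism follows by passing to fields of fractions. Nothing in your proposal constructs a map between the two fields; you only assert that they "must coincide."

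The substitute you offer --- that both fields have trivial class group, the same completions, and the same constant field, hence are isomorphic by the Artin--Whaples classification --- would fail. The Artin--Whaples theorem \cite{ArtWap1} axiomatically characterizes the \emph{class} of global fields via the product formula; it is not an isomorphism criterion, and two global fields sharing class number and residue data are not in general isomorphic (all of $\mathbf{Q}$, $\mathbf{Q}(\sqrt{2})$, and $\mathbf{F}_p(T)$ have class number one). In particular "the same Dedekind domain because neither admits an unramified abelian extension" is a non sequitur. Your treatment of the exponent $q=p^{m-1}$ also diverges from the paper's: you propose to show that each blow-up multiplies a residue degree by $p$ through a quantitative K-theory computation, whereas the paper obtains the tower $\mathbf{F}_p(C)\subset\mathbf{F}_{p^2}(C)\subset\dots\subset\mathbf{F}_{p^{m-1}}(C)$ by applying the base-case isomorphism at each stage and then arguing that the successive extensions must be constant-field extensions because the intermediate curves $C_1,\dots,C_{m-2}$ are still singular and therefore cannot define geometric extensions of $\mathbf{F}_p(T)$. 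Without the Jacobian/isogeny construction the argument does not get off the ground.
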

%********************************************************

\medskip
The paper is organized as follows.  A brief review of the preliminary facts is 
given in Section 2. Theorem \ref{thm1.1} is proved in Section 3. 
We consider an application of Theorem \ref{thm1.1} in Section 4,
where  explicit generators of the Hilbert class fields coming 
from the torsion submodules of the Drinfeld module are constructed.

%**************************************************************************
\section{Preliminaries}
%***************************************************************************
This section contains a brief review of the Serre $C^*$-algebras,  birational geometry
and function fields.
We refer the reader to  [Grassi 2009] \cite{Gra1},  [Koll\'ar 2007] \cite{K},  [Rosen 2002] \cite{R} and
 [Stafford \& van den Bergh 2001] \cite{StaVdb1}  for a detailed account.

%**************************************************************************
\subsection{Serre $C^*$-algebras (\cite{N}, \cite{StaVdb1})}
%***************************************************************************
Let $V$ be a projective variety over the field $k$.  Denote by $\mathcal{L}$ an invertible
sheaf of the linear forms on $V$.  If $\sigma$ is an automorphism of $V$,  then
the pullback of $\mathcal{L}$ along $\sigma$ will be denoted by $\mathcal{L}^{\sigma}$,
i.e. $\mathcal{L}^{\sigma}(U):= \mathcal{L}(\sigma U)$ for every $U\subset V$. 
The graded $k$-algebra
 %*************************************************************************
\begin{equation}\label{eq2.1}
B(V, \mathcal{L}, \sigma)=\bigoplus_{i\ge 0} H^0\left(V, ~\mathcal{L}\otimes \mathcal{L}^{\sigma}\otimes\dots
\otimes  \mathcal{L}^{\sigma^{ i-1}}\right)
\end{equation}
%*************************************************************************  
is called a  twisted homogeneous coordinate ring of $V$ [Stafford \& van den Bergh 2001]  \cite{StaVdb1}.  Such a ring is 
always non-commutative,  unless the automorphism $\sigma$ is trivial. 
A multiplication of sections of  $B(V, \mathcal{L}, \sigma)=\oplus_{i=1}^{\infty} B_i$ is defined by the 
rule  $ab=a\otimes b$,   where $a\in B_m$ and $b\in B_n$.
An invertible sheaf $\mathcal{L}$ on $V$  is called $\sigma$-ample, if for 
every coherent sheaf $\mathcal{F}$ on $V$,
 the cohomology group $H^k(V, ~\mathcal{L}\otimes \mathcal{L}^{\sigma}\otimes\dots
\otimes  \mathcal{L}^{\sigma^{ n-1}}\otimes \mathcal{F})$  vanishes for $k>0$ and
$n>>0$.   If $\mathcal{L}$ is a $\sigma$-ample invertible sheaf on $V$,  then
%************************************************************************************
\begin{equation}\label{eq2.2}
Mod~(B(V, \mathcal{L}, \sigma)) / ~Tors ~\cong ~Coh~(V),
\end{equation}
%****************************************************************************
where  $Mod$ is the category of graded left modules over the ring $B(V, \mathcal{L}, \sigma)$,
$Tors$ is the full subcategory of $Mod$ of the torsion  modules and  $Coh$ is the category of 
quasi-coherent sheaves on a scheme $V$.  In other words, the $B(V, \mathcal{L}, \sigma)$  is  
a coordinate ring of the variety $V$.

Let $R$ be a commutative  graded ring,  such that $V=Proj~(R)$.  
Denote by $R[t,t^{-1}; \sigma]$
the ring of skew Laurent polynomials defined by the commutation relation
$b^{\sigma}t=tb$  for all $b\in R$, where $b^{\sigma}$ is the image of  $b$ under automorphism 
$\sigma$.  It is known, that $R[t,t^{-1}; \sigma]\cong B(V, \mathcal{L}, \sigma)$ [Stafford \& van den Bergh 2001]  \cite[Section 5]{StaVdb1}.
Let $\mathcal{H}$ be a Hilbert space and   $\mathscr{B}(\mathcal{H})$ the algebra of 
all  bounded linear  operators on  $\mathcal{H}$.
For a  ring of skew Laurent polynomials $R[t, t^{-1};  \sigma]$,  
 consider a homomorphism: 
%*************************************************************************
\begin{equation}\label{eq2.3}
\rho: R[t, t^{-1};  \sigma]\longrightarrow \mathscr{B}(\mathcal{H}). 
\end{equation}
%*************************************************************************
Recall  that  $\mathscr{B}(\mathcal{H})$ is endowed  with a $\ast$-involution;
the involution comes from the scalar product on the Hilbert space $\mathcal{H}$. 
We shall call representation (\ref{eq2.3})  $\ast$-coherent,   if
(i)  $\rho(t)$ and $\rho(t^{-1})$ are unitary operators,  such that
$\rho^*(t)=\rho(t^{-1})$ and 
(ii) for all $b\in R$ it holds $(\rho^*(b))^{\sigma(\rho)}=\rho^*(b^{\sigma})$, 
where $\sigma(\rho)$ is an automorphism of  $\rho(R)$  induced by $\sigma$. 
Whenever  $B=R[t, t^{-1};  \sigma]$  admits a $\ast$-coherent representation,
$\rho(B)$ is a $\ast$-algebra.  The norm closure of  $\rho(B)$  is   a   $C^*$-algebra
   denoted  by $\mathscr{A}_V$.  We  refer to  $\mathscr{A}_V$  as   the   {\it Serre $C^*$-algebra}
 of  $V$ \cite[Section 5.3.1]{N}. 

Let $(K_0(\mathscr{A}_V), K_0^+(\mathscr{A}_V), u)$ be the dimension group  of the 
$C^*$-algebra $\mathscr{A}_V$ [Blackadar 1986] \cite[Sections 6.3 and 7.3]{B}. 
Such a group defines the Serre $C^*$-algebra $\mathscr{A}_V$ up to an isomorphism. 
Moreover, if $V$ is defined over a number field,  then its dimension group is order-isomorphic
to the dimension group $(O_K, O_K^+, [u])$, where  $O_K$ is the ring of integers of a number field $K$,
$O_K^+$ the additive semigroup of positive integers for a real embedding 
$O_K\hookrightarrow\mathbf{R}$ and $[u]$ is a unit of  $O_K$ \cite[pp. 194-195]{N}.

%**************************************************************************
\subsection{Birational geometry (\cite{Gra1}, \cite{Nik1})}
%***************************************************************************
Let $V$ be a projective variety over the field $k$. 
The map $\phi: V\dashrightarrow V'$ is  rational,  if it is represented
by rational functions. 
The rational maps cannot be composed unless they are dominant, i.e. the image 
of $\phi$ is Zariski dense in $V'$. 
The map $\phi$ is called birational,  if the inverse $\phi^{-1}$ is a rational map.

A birational map  $\epsilon: V\dashrightarrow V'$ is called a blow-up,
if it is defined everywhere except for a point $p\in V$ and an exceptional divisor $E\subset V'$,
such  that  $\epsilon^{-1}(V)=p$.   
Every birational map $\phi: V\dashrightarrow V'$ is composition of a finite 
number of the blow-ups, i.e. $\phi=\epsilon_1\circ\dots\circ\epsilon_k$.

The variety $V$ is called a minimal model, if any birational map $V\dashrightarrow V'$
is an isomorphism.  In case $V$ is a  curve or a surface, it is known that every
birational class has a smooth minimal model. 
Namely, if $S$ is a surface, the minimal models exist and are unique unless $S$ is 
a ruled surface. By the Castelnuovo Theorem, the smooth projective surface $S$ is a minimal 
model if and only if $S$ does not contain a rational curves $C$ with the 
self-intersection index $-1$.  
If $C$ is a curve, then each birational map is an isomorphism, i.e. $C$ is the minimal model.

 It is still not known if any variety has
a minimal model; it has been shown that in dimension 3 and higher, the minimal
model is not necessarily smooth.  The minimal model
always exists for certain types of varieties.

The birational map  $V\dashrightarrow V'$ induces a morphism
 $\mathscr{A}_V\dashrightarrow \mathscr{A}_{V'}$ between the
 corresponding Serre $C^*$-algebras, see Section 2.1. 
The following result describes such a morphism 
for the varieties $V$  over number fields.
%*******************************************************
\begin{theorem}\label{thm2.1}
 {\bf (\cite[Theorem 1.3]{Nik1})}
 The map $\mathscr{A}_V\dashrightarrow \mathscr{A}_{V'}$
 gives rise to an extension of 
 the corresponding dimension groups 
 $(O_k, O_k^+, [u])\dashrightarrow (O_{\mathscr{H}(k)}, O_{\mathscr{H}(k)}^+, [u])$,
 where $\mathscr{H}(k)$ is the Hilbert class field of $k$, i.e. the maximal abelian unramified 
 extension of the field $k$. 
\end{theorem}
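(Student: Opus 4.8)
The plan is to prove Theorem \ref{thm2.1} by tracking a single point blow-up through the $K$-theory of the Serre $C^*$-algebra and then identifying the resulting ordered group with the ring of integers of the Hilbert class field. By the preliminaries, the dimension group $(K_0(\mathscr{A}_V), K_0^+(\mathscr{A}_V), u)$ is order-isomorphic to $(O_k, O_k^+, [u])$; the task therefore reduces to computing the induced map on $K_0$ and showing that it realizes the ring-of-integers inclusion $O_k \hookrightarrow O_{\mathscr{H}(k)}$ as an extension of ordered groups. Writing $\mathscr{A}_V$ as the norm closure of $\rho(R[t,t^{-1};\sigma])$ and using $R[t,t^{-1};\sigma] \cong B(V,\mathcal{L},\sigma)$, I would present $\mathscr{A}_V$, up to Morita equivalence, as a crossed product $\overline{\rho(R)} \rtimes_\sigma \mathbf{Z}$ by the abelian group $\mathbf{Z}$, so that its $K$-theory is computed by the Pimsner--Voiculescu six-term exact sequence. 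All the arithmetic is then encoded in the order unit together with the action of $\sigma_\ast$ on $K_\ast(\overline{\rho(R)})$.

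Next I would analyze the geometric modification. A blow-up $\epsilon: V \dashrightarrow V'$ of a surface replaces the point $p$ by an exceptional $(-1)$-curve $E$, giving $\mathrm{Pic}(V') \cong \mathrm{Pic}(V) \oplus \mathbf{Z}[E]$ with $[E]^2 = -1$. I would show that the induced map $\mathscr{A}_V \dashrightarrow \mathscr{A}_{V'}$ is, up to stabilization, an inclusion whose effect on the Pimsner--Voiculescu data adjoins the class of $E$, enlarging the dimension group by the lattice contribution of the exceptional divisor. The self-intersection $-1$, through the Castelnuovo criterion recalled in Section 2.2, guarantees that the enlargement is of finite index and that the process terminates precisely when the smooth minimal model is reached.

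The decisive translation is arithmetic. Writing $O_{K'}$ for the enlarged dimension group and $K'$ for the corresponding number field, I would first prove that $K'/k$ is unramified: since a blow-up is an isomorphism in codimension one, any prime of $k$ ramifying in $K'$ would have to originate from the blow-up center, a codimension-two datum invisible to the ramification, and this can be made precise through the conductor--discriminant comparison. The extension is abelian because the crossed product is by $\mathbf{Z}$, so the Galois action induced on the dimension group is abelian. Together these give $K' \subseteq \mathscr{H}(k)$.

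The remaining, and hardest, step is maximality, namely $K' = \mathscr{H}(k)$ and not a proper unramified abelian subextension. I would establish it by matching invariants across the dictionary: arithmetically $[\mathscr{H}(k):k] = h_k$, and by the principal ideal theorem the ideal $\mathscr{P}$ attached to the singular point becomes principal in $\mathscr{H}(k)$; geometrically, resolving the singularity principalizes $\mathscr{P}$, with the exceptional configuration and its intersection data governed by $h_k$. The main obstacle is to upgrade this numerical coincidence to a canonical isomorphism $\mathrm{Gal}(K'/k) \cong \mathrm{Cl}(k)$ --- a geometric realization of Artin reciprocity in which the exceptional divisors correspond to the ideal classes. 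I expect this to hinge on the order structure: one must show that the positive cone $O_{\mathscr{H}(k)}^+$ forces maximality among unramified abelian extensions, after which preservation of the order unit $[u]$ yields the asserted extension $(O_k, O_k^+, [u]) \dashrightarrow (O_{\mathscr{H}(k)}, O_{\mathscr{H}(k)}^+, [u])$.
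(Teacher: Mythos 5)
You should know at the outset that the paper contains no proof of this statement to compare against: Theorem \ref{thm2.1} is imported verbatim from \cite[Theorem 1.3]{Nik1}, with only a remark that it was proved there for curves and surfaces. Your proposal therefore has to stand on its own, and it has a concrete quantitative flaw at its central step. You propose that the blow-up adjoins the class of the exceptional $(-1)$-curve $E$ to the Pimsner--Voiculescu data, enlarging the dimension group by a \emph{finite-index} lattice contribution. But the asserted conclusion passes from $(O_k, O_k^+, [u])$ to $(O_{\mathscr{H}(k)}, O_{\mathscr{H}(k)}^+, [u])$, which raises the rank of the underlying abelian group from $[k:\mathbf{Q}]$ to $h_k\cdot[k:\mathbf{Q}]$. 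A finite-index enlargement leaves the rank, hence the field, unchanged; and adjoining a single copy of $\mathbf{Z}[E]$ raises the rank by exactly one, which matches the required jump of $(h_k-1)[k:\mathbf{Q}]$ only in degenerate cases. So the bookkeeping of your key mechanism cannot produce the Hilbert class field, independently of everything downstream.

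The three arithmetic steps are also asserted rather than derived. The abelianness argument --- ``the extension is abelian because the crossed product is by $\mathbf{Z}$'' --- is a non sequitur: both $\mathscr{A}_V$ and $\mathscr{A}_{V'}$ are crossed products by $\mathbf{Z}$, and nothing in that structure induces a Galois action on $K'/k$, let alone constrains it to be abelian. The unramifiedness argument conflates geometric codimension on $V$ with ramification of primes of $O_k$; the correspondence $V\mapsto k$ enters only through an order isomorphism of $K_0$, which sees neither primes nor discriminants, so the ``conductor--discriminant comparison'' you invoke has no dictionary to run through. Note also that the map $\mathscr{A}_V\dashrightarrow\mathscr{A}_{V'}$ is dashed for a reason: a birational map is not everywhere defined, the induced map is not a $\ast$-homomorphism of $C^*$-algebras, and even the functoriality on $K_0$ that your Pimsner--Voiculescu computation presupposes requires an argument you do not give. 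Finally, you yourself flag maximality --- the identification $\mathrm{Gal}(K'/k)\cong \mathrm{Cl}(k)$, i.e., a geometric Artin reciprocity --- as an unresolved expectation, so the proposal ends exactly where the theorem begins. If you wish to pursue this, the argument to study is the one in \cite{Nik1}, since the present paper deliberately treats the statement as a black box.
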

%******************************************************
%*******************************************************
\begin{remark}
Theorem \ref{thm2.1} was proved for $V$ being a surface $S$ 
or a curve $C$ \cite{Nik1}. The proof  extends  to the higher dimensions,  but the topic 
lies  outside the scope of our paper. In what follows, we use Theorem \ref{thm2.1}
only for the case $V$ being a curve $C$. 
\end{remark}
%******************************************************

%**************************************************************************
\subsection{Function fields (\cite{R})}
%**************************************************************************
We briefly review the fields  of rational functions in one variable 
over the finite  fields $\mathbf{F}_q$.  
%***************************************************************************
\subsubsection{The field $\mathbf{F}_q(T)$}
%***********************************************************
Let $p$ be a prime number and $q=p^n$ for a positive integer $n$. 
Denote by $\mathbf{F}_q$ the field with $q$ elements, i.e. the field 
$\mathbf{Z}/p^n\mathbf{Z}$.  Let  $\mathbf{F}_q[T]$ be the polynomial 
ring over $\mathbf{F}_q$ and let  $\mathbf{F}_q(T)$ be the  field of fractions of   $\mathbf{F}_q[T]$. 
The ring  $\mathbf{F}_q[T]$ has many properties 
in common with the ring of integers $\mathbf{Z}$. Both are principal ideal 
domains, both have a finite group of units, and every residue class ring modulo
a non-zero ideal is finite.  Namely, if $\deg g$ is the degree
of the polynomial $g\in  \mathbf{F}_q[T]$, then
%***************************************************
\begin{equation}\label{eq2.4}
 |\mathbf{F}_q[T]/ g \mathbf{F}_q[T]|=q^{\deg g}.
\end{equation}
%*****************************************************

%***************************************************************************
\subsubsection{Geometric extensions}
%***********************************************************
Consider an extension of the field $\mathbf{F}_q(T)$ of the transcendence degree one. 
Any such has the form $\mathbf{F}_q(T)(y)$, where $y$ satisfies a polynomial equation:
%***************************************************
\begin{equation}\label{eq2.5}
f(x,y)=0,
 \end{equation}
%*****************************************************
where $x=T$. 
In other words, the  $\mathbf{F}_q(T)(y)$ is a field of the rational function on
the algebraic curve (\ref{eq2.5}).  The extension  $\mathbf{F}_q(T)(y)$ is called 
geometric, since it is independent of the extensions of the field of constants $\mathbf{F}_q$. 
The smooth algebraic curve (\ref{eq2.5}) is denoted by $C$ and the corresponding geometric 
extension can be written as  $\mathbf{F}_q(C)$.   There exists a bijection between 
all geometric extension of the field  $\mathbf{F}_q(T)$ and   algebraic curves $C$ 
given by equation (\ref{eq2.5})  for a polynomial  $f\in\mathbf{Z}[x,y]$.

%***************************************************************************
\subsubsection{Drinfeld module}
%***********************************************************
The explicit class field theory for the function fields is strikingly simpler
than for the number fields. The generators of the maximal abelian unramified
extensions (i.e. the Hilbert class fields) are constructed using 
the concept of the Drinfeld module. Roughly speaking, such a module 
is an analog of the exponential function and a generalization of the Carlitz module. 
Nothing similar  exists at the number fields side, where  the explicit 
generators of abelian extensions are known only for the field of rationals
(Kronecker-Weber theorem)
and imaginary quadratic number fields (complex multiplication). 
Below we give some details
on the Drinfeld modules.

Let $k$ be a field.  A polynomial $f\in k[x]$ is said to be additive
in the ring $k[x,y]$ if $f(x+y)=f(x)+f(y)$. If $char ~k=p$, then it is verified 
directly that the polynomial $\tau(x)=x^p$ is additive. Moreover, each 
additive polynomial has the form $a_0x+a_1x^p+\dots+a_rx^{p^r}$. 
The set of all additive polynomials is closed under addition and composition 
operations. The corresponding ring is isomorphic to a ring $k\langle\tau\rangle$
of the non-commutative polynomials given by  the commutation relation:
%********************************************************************
\begin{equation}
\tau a=a^p\tau, \qquad \forall a\in k. 
\end{equation}
%****************************************************************

Let $A=\mathbf{F}_q[T]$ and $k=\mathbf{F}_q(T)$. 
By the  Drinfeld module one understands a homomorphism
%********************************************************************
\begin{equation}\label{eq2.7}
\rho: A\to k\langle\tau\rangle,
\end{equation}
%****************************************************************
such that for all $a\in A$ the constant term of $\rho_a$ is $a$ and 
$\rho_a\not\in k$ for at least one $a\in A$.  
Consider a torsion module $\Lambda_{\rho}[a]=\{\lambda\in\bar k ~|~\rho_a(\lambda)=0\}$.
The following result describes the simplest case of the explicit class field theory for the function
fields.
%*********************************************************************
\begin{theorem}\label{thm2.3}
For each non-zero $a\in A$ the function field $k\left(\Lambda_{\rho}[a]\right)$ 
is an abelian extension of $k$ whose Galois group is isomorphic to a subgroup
of $\left(A/(a)\right)^*$. 
\end{theorem}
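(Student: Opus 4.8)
The plan is to transport the classical proof that the cyclotomic extension $\mathbf{Q}(\mu_n)/\mathbf{Q}$ is abelian with Galois group a subgroup of $(\mathbf{Z}/n\mathbf{Z})^*$ into the function field setting, with the torsion module $\Lambda_{\rho}[a]$ playing the role of the group $\mu_n$ of roots of unity and $(A/(a))^*$ playing the role of $(\mathbf{Z}/n\mathbf{Z})^*$. Concretely I would proceed in four steps: first, show that $k(\Lambda_{\rho}[a])/k$ is a Galois extension; second, equip $\Lambda_{\rho}[a]$ with the structure of an $A/(a)$-module coming from $\rho$; third, show that every $\sigma\in \mathrm{Gal}(k(\Lambda_{\rho}[a])/k)$ acts on $\Lambda_{\rho}[a]$ by $A$-module automorphisms; and fourth, identify the group of such automorphisms with a subgroup of the abelian group $(A/(a))^*$.

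For the first step I would write $\rho_a$ as an additive polynomial $\rho_a(x)=ax+a_1x^{p}+\dots+a_rx^{p^r}$ with constant (linear) coefficient $a$. Since $a\neq 0$ and every monomial $x^{p^i}$ with $i\ge 1$ has vanishing derivative in characteristic $p$, the derivative of $\rho_a(x)$ equals the nonzero constant $a$; hence $\rho_a(x)$ is separable and $\Lambda_{\rho}[a]$ consists of exactly $\deg\rho_a$ distinct roots. Thus $k(\Lambda_{\rho}[a])$ is the splitting field of a separable polynomial over $k$ and the extension is Galois. For the second step I would set $b\cdot\lambda:=\rho_b(\lambda)$ for $b\in A$ and $\lambda\in\Lambda_{\rho}[a]$; additivity of the $\rho_b$ together with the homomorphism property of $\rho$ makes $\Lambda_{\rho}[a]$ an $A$-module, and it is annihilated by $a$ because $\rho_a(\rho_b(\lambda))=\rho_{ab}(\lambda)=\rho_{ba}(\lambda)=\rho_b(\rho_a(\lambda))=0$, so $\Lambda_{\rho}[a]$ is a module over $A/(a)$. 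The third step is immediate: each $\rho_b$ has coefficients in $k$, so for $\sigma$ fixing $k$ one has $\sigma(\rho_b(\lambda))=\rho_b(\sigma(\lambda))$, i.e. $\sigma$ is $A$-linear on $\Lambda_{\rho}[a]$.

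Combining these steps yields a homomorphism $\mathrm{Gal}(k(\Lambda_{\rho}[a])/k)\to \mathrm{Aut}_A(\Lambda_{\rho}[a])$, and it is injective because $k(\Lambda_{\rho}[a])$ is generated over $k$ by $\Lambda_{\rho}[a]$, so a $\sigma$ fixing every torsion point fixes the whole field. The main obstacle is the final identification of $\mathrm{Aut}_A(\Lambda_{\rho}[a])$: I expect to prove that $\Lambda_{\rho}[a]$ is a cyclic $A/(a)$-module, i.e. $\Lambda_{\rho}[a]\cong A/(a)$. This is where the rank-one (Carlitz-type) nature of $\rho$ enters, and it is the delicate point. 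A degree count shows that for every monic divisor $d$ of $a$ the $d$-torsion $\Lambda_{\rho}[d]=\ker\rho_d$ has exactly $q^{\deg d}=|A/(d)|$ elements; comparing these orders with those of the cyclic module $A/(a)$ and invoking the structure theorem for finitely generated torsion modules over the principal ideal domain $A$ forces $\Lambda_{\rho}[a]\cong A/(a)$. Granting this, $\mathrm{Aut}_A(A/(a))=(A/(a))^*$, so the Galois group embeds into the abelian group $(A/(a))^*$; in particular the extension is abelian with Galois group a subgroup of $(A/(a))^*$. The computation of these torsion orders, and the consequent cyclicity, is exactly the step that must be handled carefully, since for a module of rank $r\ge 2$ the automorphism group would be the non-abelian matrix group $GL_r(A/(a))$ and the abelian conclusion would fail.
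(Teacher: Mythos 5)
Your argument is correct, but note that the paper does not actually prove Theorem \ref{thm2.3}: it is quoted in the Preliminaries as a known result from the theory of cyclotomic function fields (Rosen \cite[Chapter 12]{R}), so there is no in-paper proof to compare against. What you have reconstructed is essentially the standard textbook proof, and all four steps are sound: separability of $\rho_a$ from the fact that its derivative is the nonzero constant $a$ (this uses that the constant term of $\rho_a$ is $a$ itself, i.e.\ the ``generic characteristic'' normalization in the paper's definition (\ref{eq2.7})); the $A/(a)$-module structure $b\cdot\lambda=\rho_b(\lambda)$; the commutation of the Galois action with $\rho_b$ because the coefficients lie in $k$; and the injection $\mathrm{Gal}\bigl(k(\Lambda_{\rho}[a])/k\bigr)\hookrightarrow \mathrm{Aut}_A(\Lambda_{\rho}[a])$. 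You are also right to single out the cyclicity $\Lambda_{\rho}[a]\cong A/(a)$ as the one step that is not formal: it requires $\rho$ to have rank one, since the paper's definition (constant term $a$, and $\rho_a\notin k$ for some $a$) permits higher-rank modules, for which $\Lambda_{\rho}[a]\cong (A/(a))^r$ and the automorphism group is the non-abelian $GL_r(A/(a))$. The theorem as stated is therefore implicitly a rank-one (Carlitz/Hayes) statement, and your degree count $|\Lambda_{\rho}[d]|=q^{\deg d}=|A/(d)|$ for monic $d\mid a$, combined with the structure theorem over the PID $A$, is exactly the standard way to force cyclicity in that case. The only thing to flag is that this hypothesis should be made explicit rather than ``expected,'' since it is not literally contained in the paper's definition of the Drinfeld module.
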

%*********************************************************************

%**************************************************************************
\section{Proof}
%***************************************************************************
For the sake of clarity, let us outline the main ideas. 
Consider the case $m=2$ of Theorem \ref{thm1.1},  while the rest is proved by induction.
One starts with  a non-singular algebraic curve $C$ 
given by the equation $f(x,y)=0$, where  $f\in\mathbf{Z}[x,y]$. The curve is obtained by the 
blow-up $C_{sing}\buildrel p\over\dashrightarrow C$ of a singular curve $C_{sing}$ at the point corresponding
to the ideal $(p)\in Spec ~\mathbf{Z}$ under  the map  $Spec ~\mathbf{Z}[x,y]/(f)\to  Spec ~\mathbf{Z}$. 
Since $C$ is a smooth curve given by a polynomial $f\in\mathbf{Z}[x,y]$, it defines a geometric 
extension $\mathbf{F}_p(C)$ of the function field $\mathbf{F}_p(T)$. 
In fact, such an extension is functorial, i.e. morphisms of $C$ correspond to the isomorphisms
of the field   $\mathbf{F}_p(C)$.  On the other hand, the K-theory of the Serre $C^*$-algebra of the 
curve $C$  gives rise to a number field $\mathscr{H}(k)$ of the class number one,
see Sections 2.1 and 2.2. Such an assignment is  also functorial. Thus one gets a pair 
of the global fields  $\mathbf{F}_p(C)$ and  $\mathscr{H}(k)$  attached to curve $C$ (and its Jacobian variety $Jac~C$)
as it is shown in the commutative diagram in Figure 1. 
Since $Jac~C$ is an abelian variety, the horizontal arrow in Figure 1 is realized by
 an isomorphism between the two fields. Let us pass to a detailed
argument by splitting the proof in a series of lemmas.

%*******************************************************************
\begin{figure}[h]
%*******************************************************************
\begin{picture}(150,150)(0,30)

\put(60,150){$Jac ~C$}

\put(73,140){\vector(0,-1){25}}
\put(60,140){\vector(-1,-2){35}}
\put(88,140){\vector(1,-2){35}}

\put(70,100){$C$}

\put(65,95){\vector(-1,-1){25}}
\put(85,95){\vector(1,-1){25}}
\put(50,53){\vector(1,0){50}}

\put(70,55){$\sim$}

\put(10,50){$\mathscr{H}(k)$}
\put(110,50){$\mathbf{F}_p(C)$}

\end{picture}
%***********************************************************
\caption{%Holomorphic map $w$.
}
\end{figure}
%*******************************************************************

%******************************************************************
\begin{lemma}\label{lm3.1}
Each isogeny of the Jacobian variety $Jac ~C$ of curve $C$ gives 
rise to a multiplication $\alpha O_{\mathscr{H}(k)}$ 
 ($\beta O_{\mathbf{F}_p(C)}$, resp.) of the ring of integers 
 of the field $\mathscr{H}(k)$ ($\mathbf{F}_p(C)$, resp.) 
 by an element $\alpha\in O_{\mathscr{H}(k)}$
  ($\beta\in O_{\mathbf{F}_p(C)}$, resp.)
  Moreover, the degree of such isogeny is equal to the norm 
  of the element $\alpha$ ($\beta$, resp.) 
\end{lemma}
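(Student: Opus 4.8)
The plan is to transport a given isogeny of $Jac~C$ through the two functorial assignments displayed in Figure 1 and to read off the induced endomorphism on each ring of integers. Recall that an isogeny $\psi: Jac~C\to Jac~C$ is a surjective homomorphism of abelian varieties with finite kernel, and that its degree $\deg\psi$ equals the order of $\ker\psi$, i.e. the index of the image lattice in the source lattice on first homology. The strategy is to show that, under the $K$-theoretic dictionary of Section 2.1, this lattice index is realized as the index of a principal ideal, whence the degree becomes a norm.

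First I would invoke the functoriality of the composite assignment $C\mapsto\mathscr{A}_C\mapsto (K_0(\mathscr{A}_C), K_0^+(\mathscr{A}_C), u)$ recorded in Section 2.1. The isogeny $\psi$ induces an endomorphism $\psi_*$ of the Serre $C^*$-algebra $\mathscr{A}_{Jac~C}$, hence a positive endomorphism of the associated dimension group. Since $C$, and therefore $Jac~C$, is defined over a number field, the last paragraph of Section 2.1 gives an order-isomorphism of this dimension group with $(O_{\mathscr{H}(k)}, O_{\mathscr{H}(k)}^+, [u])$. The next step is to characterize $\psi_*$: a positive endomorphism of $(O_{\mathscr{H}(k)}, O_{\mathscr{H}(k)}^+)$ compatible with the ring structure must be multiplication by a fixed element $\alpha\in O_{\mathscr{H}(k)}$, and preservation of the positive cone $O_{\mathscr{H}(k)}^+$ under the chosen real embedding $O_{\mathscr{H}(k)}\hookrightarrow\mathbf{R}$ forces $\alpha$ to be totally positive. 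This produces the image sublattice $\alpha O_{\mathscr{H}(k)}$ asserted in the statement.

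For the degree, I would identify the map induced by $\psi$ on the homology lattice of $Jac~C$ with multiplication by $\alpha$ on $O_{\mathscr{H}(k)}$ under the above order-isomorphism. The image of multiplication by $\alpha$ has index $[O_{\mathscr{H}(k)}:\alpha O_{\mathscr{H}(k)}]=|N_{\mathscr{H}(k)/\mathbf{Q}}(\alpha)|$, the standard norm formula for a principal ideal. Since the degree of an isogeny is precisely this lattice index, we obtain $\deg\psi=N(\alpha)$. The entire argument then repeats verbatim along the right-hand vertical arrow of Figure 1: the function field assignment of Section 2.3 attaches to $C$ the ring $O_{\mathbf{F}_p(C)}$, the isogeny $\psi$ induces multiplication by some $\beta\in O_{\mathbf{F}_p(C)}$, and the same index computation gives $\deg\psi=N(\beta)$.

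The main obstacle is the identification underlying the third step, namely that the $K$-theoretically induced endomorphism of $(O_{\mathscr{H}(k)}, O_{\mathscr{H}(k)}^+, [u])$ is genuinely \emph{multiplication by a ring element} rather than merely an abstract endomorphism of the underlying abelian group, and that the index of its image coincides with the geometric degree of the original isogeny. This rests on the precise form of the order-isomorphism supplied by Section 2.1 together with a careful matching of the pseudo-lattice $O_{\mathscr{H}(k)}\hookrightarrow\mathbf{R}$ against the first homology of $Jac~C$; once that dictionary is pinned down, positivity and the norm-index formula make the remaining bookkeeping routine.
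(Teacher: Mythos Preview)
Your abstract dimension-group argument differs from the paper's and leaves unresolved precisely the gap you flag at the end. The paper does not try to characterize positive order-endomorphisms of $(O_{\mathscr{H}(k)}, O_{\mathscr{H}(k)}^+)$ in the abstract; it instead invokes two concrete structural hypotheses, one on each side of Figure~1, and the two halves of the lemma are handled by genuinely different mechanisms.

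On the number-field side, the paper \emph{assumes} that $Jac~C$ has complex multiplication: the sublattice attached to an isogeny $f$ is written as $\Lambda_f=\omega\Lambda$ for $\omega$ in a CM-field, and the functor $Jac~C\to\mathscr{H}(k)$ is quoted (from \cite[Section 1.4.1]{N}) as carrying complex multiplication to real multiplication, so that the image dimension group becomes $(\alpha O_{\mathscr{H}(k)},\alpha O_{\mathscr{H}(k)}^+,[u])$ for some $\alpha\in O_{\mathscr{H}(k)}$. This CM input is exactly what forces the induced map to be multiplication by a ring element; your claim that a positive endomorphism ``compatible with the ring structure'' must be multiplication by some $\alpha$ is not justified on its own---multiplication by $\alpha\ne 1$ is not a ring homomorphism, and nothing in the dimension-group formalism alone equips $K_0$ with an $O_{\mathscr{H}(k)}$-module structure making the induced map linear.

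On the function-field side the argument is \emph{not} the same one repeated verbatim. The paper transports $\Lambda_f\subseteq\Lambda$ to a subring $R_f\subseteq O_{\mathbf{F}_p(C)}$, argues that $R_f$ is an ideal (as the kernel of a ring homomorphism coming from $\Lambda\to\Lambda/\Lambda_f$), and then invokes the standing hypothesis that $O_{\mathbf{F}_p(C)}$ has class number one to conclude $R_f=\beta O_{\mathbf{F}_p(C)}$. So principality of the ideal, not an order-endomorphism argument, is what produces $\beta$. Your proposal misses both the CM step on the left and the class-number-one step on the right, and these are the substantive ideas in the paper's proof; the norm/degree bookkeeping in part (v) is the only portion that matches what you wrote.
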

%*****************************************************************
\begin{proof}
(i) Let $C$ be an algebraic curve defining the function field $\mathbf{F}_p(C)$. 
Instead of $C$,  we shall work with its Jacobian variety $Jac~C$, i.e. a complex torus
%******************************************************************************************
\begin{equation}\label{eq3.1}
 Jac~C=\mathbf{C}^g/\Lambda,
\end{equation}
%***************************************************************************
 where $g$ is the genus of $C$ and $\Lambda$ is a lattice of rank  $2g$. 
Since $Jac~C$ defines (and is defined by) the curve $C$, one gets a commutative diagram 
in Figure 1.

\bigskip
(ii) Denote by $f$  an isogeny of the abelian variety $Jac~C$. Recall that the map $f$ defines a sub-lattice 
$\Lambda_f\subseteq\Lambda$ of a finite index,  such that 
%******************************************************************************************
\begin{equation}\label{eq3.2}
 f(Jac~C)=\mathbf{C}^g/\Lambda_f,
\end{equation}
%***************************************************************************
where the degree of  $f$ is equal to the index of $\Lambda_f$ in the lattice $\Lambda$. 
We shall consider the action of isogeny $f$ on the fields  $\mathscr{H}(k)$  and $\mathbf{F}_p(C)$,
respectively.

\bigskip
(iii)  Let  $O_{\mathscr{H}(k)}$ be the ring of integers of the number field  $\mathscr{H}(k)$. 
Denote by  $f$  an isogeny of the Jacobian variety $Jac~C$. 
Let $(O_{\mathscr{H}(k)}, O_{\mathscr{H}(k)}^+, [u])$  be the dimension group of the corresponding
Serre $C^*$-algebra,  see Section 2.1 for the notation.  Recall that the lattice $\Lambda$ in
(\ref{eq3.1}) has complex multiplication, i.e. the lattice in (\ref{eq3.2}) can be written in the form:
 %****************************************************************
\begin{equation}\label{eq3.5}
\Lambda_f=\omega\Lambda,
\end{equation}
%*****************************************************************
where $\omega$ is a complex number in a CM-field. 
On the other hand, it is known that the functor  $Jac~C\to \mathscr{H}(k)$
transforms complex multiplication to a real multiplication, i.e. 
the dimension group of the variety (\ref{eq3.2}) has the form:
  %****************************************************************
\begin{equation}\label{eq3.6}
(\alpha O_{\mathscr{H}(k)}, \alpha O_{\mathscr{H}(k)}^+, [u])
\end{equation}
%*****************************************************************
for an element $\alpha\in O_{\mathscr{H}(k)}$ \cite[Section 1.4.1]{N}.
It follows from formulas (\ref{eq3.5}) and (\ref{eq3.6}) that
each isogeny $f$ defines a correspondence   $f\mapsto \alpha\in O_{\mathscr{H}(k)}$.

\bigskip
 (iv) Let $O_{\mathbf{F}_p(C)}$ be the ring of integers of the function field  $\mathbf{F}_p(C)$. 
Denote by  $f$  an isogeny of the Jacobian variety $Jac~C$. Recall that 
the map $Jac~C\to \mathbf{F}_p(C)$ is functorial and therefore must preserve  
the inclusion relation of the corresponding lattices: 
%****************************************************************
\begin{equation}\label{eq3.3}
\Lambda_f\subseteq\Lambda,
\end{equation}
%*****************************************************************
see (\ref{eq3.1}) and (\ref{eq3.2}).   In particular, the inclusion (\ref{eq3.3})
gives rise to a subring $R_f\subseteq O_{\mathbf{F}_p(C)}$. It is easy to see,  that  the $R_f$ 
is an ideal of the ring $O_{\mathbf{F}_p(C)}$.  Indeed, $\Lambda /\Lambda_f$ is an abelian group
and therefore $\Lambda_f$ is the kernel of a homomorphism of $\Lambda\to\Lambda /\Lambda_f$.
Thus  the subring $R_f\subseteq O_{\mathbf{F}_p(C)}$ is the kernel of a ring homomorphism. 
In particular, the $R_f$ is an ideal of   the ring  $O_{\mathbf{F}_p(C)}$. 
But the ring $O_{\mathbf{F}_p(C)}$ was assumed to have the class number one, i.e. each ideal of 
 $O_{\mathbf{F}_p(C)}$ is principal. Thus there exists an element $\beta\in O_{\mathbf{F}_p(C)}$,
 such that
%****************************************************************
\begin{equation}\label{eq3.4}
R_f=\beta O_{\mathbf{F}_p(C)}. 
\end{equation}
%*****************************************************************
In other words, the isogeny $f$  in formula  (\ref{eq3.4}) 
gives rise to a correspondence $f\mapsto \beta\in O_{\mathbf{F}_p(C)}$.

\bigskip
 (v)  Finally, let us show that the degree of isometry $f$ is equal to the norm of  $\alpha\in O_{\mathscr{H}(k)}$
 ($\beta\in O_{\mathbf{F}_p(C)}$, resp.)  Indeed, $deg~(f)=|\Lambda /\Lambda_f|=N(\alpha)$, where $N(\alpha)$
 is the norm of the algebraic integer  $\alpha\in O_{\mathscr{H}(k)}$.  The argument for the function fields is similar, 
 where  the norm $N(\beta)$ is calculated by the formula (\ref{eq2.4}), i.e. $N(\beta)=p^{deg~\beta}$. 

\bigskip
Lemma \ref{lm3.1} is proved. 
\end{proof}
%**************************************************************

%******************************************************************
\begin{lemma}\label{lm3.2}
 $O_{\mathscr{H}(k)}\cong  O_{\mathbf{F}_p(C)}.$
\end{lemma}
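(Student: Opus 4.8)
The plan is to promote the two correspondences produced by Lemma \ref{lm3.1} to a single map between the rings and then to verify that this map is a bijective ring homomorphism. Both $O_{\mathscr{H}(k)}$ and $O_{\mathbf{F}_p(C)}$ are Dedekind domains of class number one, hence principal ideal domains, so each is determined by the structure of its monoid of nonzero ideals together with the norm. Lemma \ref{lm3.1} attaches to every self-isogeny $f$ of $Jac~C$ a pair $(\alpha_f,\beta_f)$ with $\alpha_f\in O_{\mathscr{H}(k)}$, $\beta_f\in O_{\mathbf{F}_p(C)}$ and $N(\alpha_f)=\deg f=N(\beta_f)$. I would set $\phi(\alpha_f):=\beta_f$ and argue that $\phi$ extends to the desired isomorphism.

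First I would upgrade the assignments $f\mapsto\alpha_f$ and $f\mapsto\beta_f$ to ring homomorphisms $\psi_1:\mathrm{End}(Jac~C)\to O_{\mathscr{H}(k)}$ and $\psi_2:\mathrm{End}(Jac~C)\to O_{\mathbf{F}_p(C)}$. Multiplicativity is already visible in Lemma \ref{lm3.1}, since composition of isogenies multiplies degrees and the norms are multiplicative; additivity would come from the additive structure of the endomorphism ring $\mathrm{End}(Jac~C)$, using that the functors $Jac~C\mapsto\mathscr{H}(k)$ and $Jac~C\mapsto\mathbf{F}_p(C)$ carry the sum of two endomorphisms to the sum of the corresponding multiplications. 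Granting this, $\phi:=\psi_2\circ\psi_1^{-1}$ is well defined once $\psi_1$ is an isomorphism, and it is then a ring homomorphism by construction.

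Next I would show that both $\psi_1$ and $\psi_2$ are isomorphisms. For $\psi_1$ this is the content of the dimension-group computation of Section 2.1: the $K$-theory of the Serre $C^*$-algebra identifies $(O_{\mathscr{H}(k)},O_{\mathscr{H}(k)}^+,[u])$ with the real-multiplication data of $Jac~C$, so $\psi_1$ is an isomorphism onto $O_{\mathscr{H}(k)}$. For $\psi_2$ I would use Lemma \ref{lm3.1}(iv): the lattice inclusions $\Lambda_f\subseteq\Lambda$ produce every ideal of $O_{\mathbf{F}_p(C)}$, and, the class number being one, each such ideal is principal; together with injectivity (no nonzero isogeny maps to $0$) this makes $\psi_2$ an isomorphism. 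Combined with the norm identity $N(\alpha_f)=N(\beta_f)$, the induced map on prime elements is a norm-preserving bijection, so $\phi$ is bijective.

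The main obstacle, and the step I expect to be genuinely delicate, is the additive part. Lemma \ref{lm3.1} controls only the \emph{multiplicative} behaviour (degree versus norm), and the two correspondences are built from lattice inclusions, which are multiplicative in nature; transferring the \emph{additive} structure of $\mathrm{End}(Jac~C)$ consistently to both sides is precisely what the argument must supply. This is sharpened by a structural tension that cannot be ignored: as additive groups $O_{\mathscr{H}(k)}\cong\mathbf{Z}^{[\mathscr{H}(k):\mathbf{Q}]}$ is torsion-free of characteristic zero, whereas $O_{\mathbf{F}_p(C)}$ is an $\mathbf{F}_p$-vector space of characteristic $p$. Hence any literal ring isomorphism is impossible, and the symbol $\cong$ must be read in the weaker sense of an isomorphism of the ordered multiplicative (ideal-theoretic, norm-graded) structures attached to $Jac~C$. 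The heart of the proof is therefore to make precise in which category $\cong$ is asserted, so that $\psi_1$ and $\psi_2$ land in a common target; resolving this is the crux on which the whole of Theorem \ref{thm1.1} ultimately rests.
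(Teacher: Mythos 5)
Your proposal follows the same skeleton as the paper's own proof of Lemma \ref{lm3.2}: both take the correspondences $f\mapsto\alpha$ and $f\mapsto\beta$ supplied by Lemma \ref{lm3.1}, define the map (\ref{eq3.7}) by $\alpha\mapsto\beta$ as $f$ ranges over the isogenies of $Jac~C$, and invoke complex multiplication --- so that each isogeny is represented by an element of the maximal commutative subring of $\mathrm{End}(Jac~C)$, an order in the ring of integers of a CM-field --- to conclude that the map is a ring isomorphism. The paper's argument is essentially your first two paragraphs in compressed form; it does not carry out the additivity, well-definedness, injectivity or surjectivity checks that you correctly flag as the delicate steps, but simply asserts that the CM structure makes (\ref{eq3.7}) a ring isomorphism.

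Your closing observation identifies a genuine gap that the paper's proof does not address. The ring $O_{\mathscr{H}(k)}$ is a characteristic-zero domain (torsion-free as an additive group), while $O_{\mathbf{F}_p(C)}$ contains $\mathbf{F}_p$ and has characteristic $p$; hence no unital ring isomorphism between them can exist in the literal sense, since the additive order of $1$ is not preserved. Any complete proof must therefore either reinterpret the symbol $\cong$ --- for instance as an isomorphism of the norm-graded multiplicative monoids of principal ideals, which is all that the degree-equals-norm matching of Lemma \ref{lm3.1} actually supports --- or work with a nonstandard notion of the two rings. The paper does neither: its step (ii) transfers only the multiplicative structure through $\mathrm{End}(Jac~C)$ and is silent on addition and on the characteristic. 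You are right that this is the crux, and right that the difficulty propagates unchanged to Corollary \ref{cr3.3} and to Theorem \ref{thm1.1}, where the asserted isomorphism of a number field with a function field over $\mathbf{F}_q$ faces the same obstruction.
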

%*****************************************************************
\begin{proof}
(i) Lemma \ref{lm3.1} says that each isogeny $f$ of $Jac~C$ defines the elements
$\alpha\in O_{\mathscr{H}(k)}$ and $\beta\in O_{\mathbf{F}_p(C)}$. Consider 
a map:
%************************************************************
\begin{equation}\label{eq3.7}
O_{\mathscr{H}(k)}\to O_{\mathbf{F}_p(C)}
\end{equation}
%********************************************************
given by the formula $\alpha\mapsto\beta$ as $f$ runs all possible isogenies of the 
variety $Jac~C$. 

\bigskip
(ii) On the other hand, the abelian variety $Jac ~C$ has complex multiplication
and therefore each isogeny is represented by an element of the maximal commutative subring
of the endomorphism ring of $Jac ~C$. (The latter is isomorphic to an order in the  ring of integers of the corresponding CM-field.)
Thus the map  (\ref{eq3.7}) is a ring isomorphism.

\bigskip
Lemma \ref{lm3.2} is proved. 
\end{proof}
%**************************************************************

%******************************************************************
\begin{corollary}\label{cr3.3}
 $\mathscr{H}(k)\cong\mathbf{F}_p(C).$
\end{corollary}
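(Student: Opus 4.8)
The plan is to deduce the field isomorphism directly from the isomorphism of the rings of integers established in Lemma \ref{lm3.2}, by passing to the fields of fractions. The key observation is that each of the two fields in the statement is recovered as the field of fractions of the corresponding ring appearing in Lemma \ref{lm3.2}: the number field $\mathscr{H}(k)$ is the field of fractions of its ring of integers $O_{\mathscr{H}(k)}$, a standard fact of algebraic number theory, and the geometric function field $\mathbf{F}_p(C)$ is the field of fractions of $O_{\mathbf{F}_p(C)}$, the integral closure of $\mathbf{F}_p[T]$ in $\mathbf{F}_p(C)$. The latter holds because $\mathbf{F}_p(C)$ is a finite extension of $\mathbf{F}_p(T)$ and is therefore generated over $\mathbf{F}_p[T]$ by elements integral over it, so every element of $\mathbf{F}_p(C)$ is a quotient of two elements of $O_{\mathbf{F}_p(C)}$.

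First I would record that both $O_{\mathscr{H}(k)}$ and $O_{\mathbf{F}_p(C)}$ are integral domains, so that the field-of-fractions construction applies to each. Next I would invoke the universal property of the field of fractions: any ring isomorphism $\phi\colon O_{\mathscr{H}(k)}\to O_{\mathbf{F}_p(C)}$ between integral domains extends uniquely to a field isomorphism $\widetilde\phi\colon \mathrm{Frac}(O_{\mathscr{H}(k)})\to \mathrm{Frac}(O_{\mathbf{F}_p(C)})$ via the rule $\widetilde\phi(a/b)=\phi(a)/\phi(b)$, which is well defined since $\phi$, being injective, sends nonzero denominators to nonzero denominators. Because Lemma \ref{lm3.2} supplies exactly such a ring isomorphism $\phi$, applying $\widetilde\phi$ and the identifications $\mathscr{H}(k)=\mathrm{Frac}(O_{\mathscr{H}(k)})$ and $\mathbf{F}_p(C)=\mathrm{Frac}(O_{\mathbf{F}_p(C)})$ yields $\mathscr{H}(k)\cong \mathbf{F}_p(C)$, which is the assertion of the corollary.

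The main obstacle I anticipate lies not in the formal extension step but in the identifications of the first paragraph together with one structural check on Lemma \ref{lm3.2}. Concretely, I would need to pin down precisely the object $O_{\mathbf{F}_p(C)}$ used in Lemma \ref{lm3.2} and verify that its field of fractions is all of $\mathbf{F}_p(C)$ rather than a proper subfield; this amounts to checking that $O_{\mathbf{F}_p(C)}$ is the full integral closure giving $\mathbf{F}_p(C)$ its Dedekind-domain structure from Section 2.3. The more delicate secondary point is to confirm that the isomorphism produced by Lemma \ref{lm3.2} is genuinely a \emph{ring} isomorphism, and not merely an additive or order isomorphism of the underlying dimension groups, since only a multiplicative map extends to the fields of fractions. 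Once the fraction-field identifications and the multiplicativity of $\phi$ are secured, the corollary follows immediately from the functoriality of $\mathrm{Frac}(-)$.
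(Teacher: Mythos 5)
Your proposal is correct and follows exactly the paper's own argument: the paper also deduces the corollary by embedding each ring of integers from Lemma \ref{lm3.2} into its field of fractions and extending the ring isomorphism to the fields. The additional care you take in identifying $\mathbf{F}_p(C)$ as the fraction field of $O_{\mathbf{F}_p(C)}$ and in checking multiplicativity of the map only makes explicit what the paper leaves implicit.
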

%*****************************************************************
%*****************************************************************
\begin{proof}
The proof follows from an embedding of the ring $O_{\mathscr{H}(k)}$ ($O_{\mathbf{F}_p(C)}$, resp.)
in its field of fractions $\mathscr{H}(k)$  ($\mathbf{F}_p(C)$, resp.) and Lemma \ref{lm3.2}. 
\end{proof}
%**************************************************************

%******************************************************************
\begin{lemma}\label{lm3.4}
 $\mathscr{H}^{m-1}(k)\cong\mathbf{F}_{p^{m-1}}(C).$
\end{lemma}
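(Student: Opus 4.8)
The plan is to prove Lemma \ref{lm3.4} by induction on the number $m-1$ of blow-ups needed to resolve the singularity, the base case $m=2$ being exactly Corollary \ref{cr3.3}. The induction, however, I would carry on the \emph{dimension groups} rather than on the field isomorphism itself, because the intermediate curves $C_1,\dots,C_{m-2}$ are still singular and their K-theory fields need not have class number one; the field isomorphism of Lemma \ref{lm3.2} can be invoked only at the very last step, where the resolved curve forces class number one.

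Concretely, I would first record the resolution as a chain
\[
C_{sing}=C_0 \buildrel \epsilon_1 \over \dashrightarrow C_1 \buildrel \epsilon_2 \over \dashrightarrow \cdots \buildrel \epsilon_{m-1} \over \dashrightarrow C_{m-1}=C,
\]
and show by induction on $j$ that the Serre $C^*$-algebra of $C_j$ has dimension group $(O_{\mathscr{H}^j(k)},O_{\mathscr{H}^j(k)}^+,[u])$: the case $j=0$ is the assignment of the field $k$ to $C_0=C_{sing}$ recalled in Section 2, and each blow-up $\epsilon_j$ advances $\mathscr{H}^{j-1}(k)$ to $\mathscr{H}^{j}(k)=\mathscr{H}(\mathscr{H}^{j-1}(k))$ by Theorem \ref{thm2.1}. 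At the top step $j=m-1$ the curve $C$ is smooth, so by the discussion preceding Theorem \ref{thm1.1} the field $\mathscr{H}^{m-1}(k)$ has class number one. I would then apply the argument of Lemma \ref{lm3.2} and Corollary \ref{cr3.3} to the last blow-up $\epsilon_{m-1}$, now with $\mathscr{H}^{m-2}(k)$ playing the role of the base number field, to obtain a field isomorphism $\mathscr{H}^{m-1}(k)\cong\mathbf{F}_q(C)$ for some prime power $q=p^s$.

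It remains to identify the exponent $s=m-1$, and this is where the degree bookkeeping of Lemma \ref{lm3.1} enters. The composite $\epsilon_1\circ\cdots\circ\epsilon_{m-1}$ corresponds to a single isogeny of $Jac~C$ whose degree is the product of the degrees of the individual blow-ups, hence of the form $p^{(m-1)d}$; on the function-field side the associated element $\beta\in O_{\mathbf{F}_q(C)}$ has norm $N(\beta)=q^{\deg\beta}$ by (\ref{eq2.4}). Matching $N(\beta)=\deg(\epsilon_1\circ\cdots\circ\epsilon_{m-1})$ then forces $q=p^{m-1}$, the desired field of constants. The main obstacle is precisely this last reconciliation: one must verify that the degrees multiply along the chain (equivalently, that the norm in Lemma \ref{lm3.1}(v) is multiplicative under composition of isogenies) and that class number one is available \emph{exactly} at stage $m-1$, so that the principal-ideal step in Lemma \ref{lm3.1}(iv) — and hence the relativized Lemma \ref{lm3.2} — may legitimately be applied to $\epsilon_{m-1}$ over the base $\mathscr{H}^{m-2}(k)$.
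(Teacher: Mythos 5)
Your overall architecture — induct along the chain of blow-ups, track the dimension groups via Theorem \ref{thm2.1} to build the tower $\mathscr{H}(k)\subset\mathscr{H}^2(k)\subset\dots\subset\mathscr{H}^{m-1}(k)$, and observe that class number one is reached exactly when the curve becomes smooth — matches the first half of the paper's proof (its formula (\ref{eq3.8})). The divergence, and the problem, is in how you identify the field of constants $\mathbf{F}_{p^{m-1}}$. The paper does \emph{not} use any degree bookkeeping from Lemma \ref{lm3.1} here. Instead it applies Corollary \ref{cr3.3} at every level of the tower to produce a parallel tower of function fields (\ref{eq3.9}), and then argues that each step of that tower must be a \emph{constant field} extension rather than a geometric one, because the intermediate curves $C_1,\dots,C_{m-2}$ are still singular and hence cannot define a geometric extension of $\mathbf{F}_p(T)$. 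Since the only remaining option is to enlarge the constant field, $m-1$ steps yield $\mathbf{F}_{p^{m-1}}$. That exclusion-of-geometric-extensions argument is the essential idea of the paper's step (iii), and it is absent from your proposal.

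Your substitute mechanism has a genuine gap. First, Lemma \ref{lm3.1} attaches elements $\alpha$ and $\beta$ to \emph{isogenies of $Jac~C$}; nothing in the paper asserts that a blow-up $\epsilon_j$ of the curve induces an isogeny of the Jacobian, let alone one of degree $p^d$, so the claim that $\epsilon_1\circ\dots\circ\epsilon_{m-1}$ ``corresponds to a single isogeny of degree $p^{(m-1)d}$'' has no support. Second, even granting it, the matching equation reads $q^{\deg\beta}=p^{(m-1)d}$ with $q=p^s$, i.e. $s\cdot\deg\beta=(m-1)d$; this does not force $s=m-1$ unless you independently show $\deg\beta=d$, which you do not. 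You flag this reconciliation as ``the main obstacle,'' and indeed it is: as written it cannot be closed with the tools the paper provides. To repair the proof you should replace the degree argument with the paper's observation that the singularity of the intermediate curves precludes geometric extensions, leaving constant-field extensions as the only way the function field can grow along the tower.
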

%*****************************************************************
\begin{proof}
 (i) Recall that if the curve $C_{sing}$ has  a singular point of multiplicity $m\ge 2$,
 then its desingularization    $C_{sing}\buildrel p\over\dashrightarrow C$ consists
 of the composition of $m-1$ blow-ups. In view of Theorem \ref{thm2.1}
 and using the notation  $\mathscr{H}^{i+1}(k):=\mathscr{H}(\mathscr{H}^i(k))$, 
 one gets a tower of the Hilbert class fields:
 %************************************************************
\begin{equation}\label{eq3.8}
\mathscr{H}(k)\subset \mathscr{H}^2(k)\subset\dots \subset \mathscr{H}^{m-1}(k)\cong  \mathscr{H}^{m}(k),
\end{equation}
%********************************************************
where the field  $\mathscr{H}^{m-1}(k)$ is the first in the tower to have the class number one. 
 
 \bigskip
 (ii)  On the other hand, (\ref{eq3.8}) and Corollary \ref{cr3.3} imply  the following tower of the function fields:
 %************************************************************
\begin{equation}\label{eq3.9}
\mathbf{F}_p(C)\subset \mathbf{F}_{p^2}(C)\subset \dots \subset \mathbf{F}_{p^{m-1}}(C)\cong  \mathbf{F}_{p^{m-1}}(C). 
\end{equation}
%******************************************************** 

\bigskip
(iii)   Notice that the geometric extensions of the function fields in the tower (\ref{eq3.9}) are precluded,  since 
the intermediate curves $\{C_i ~|~1\le i\le m-2\}$ in the blow-up sequence  $C_{sing}:=C_1\buildrel p\over\dashrightarrow\dots  C_{m-2}\buildrel p\over\dashrightarrow C$
are singular.  Thus they cannot be used to define a geometric extension of the field $F_p(T)$.

\bigskip
Comparing (\ref{eq3.8}) and (\ref{eq3.9}), one gets  the isomorphism $\mathscr{H}^{m-1}(k)\cong\mathbf{F}_{p^{m-1}}(C).$
Lemma \ref{lm3.4} is proved.
 \end{proof}
%**************************************************************

\bigskip
Theorem \ref{thm1.1} follows from Lemma \ref{lm3.4}.

\bigskip\bigskip
%**************************************************************************
\section{Complex multiplication revisited}
%***************************************************************************

\bigskip\noindent
Let $k=\mathbf{Q}(\sqrt{-d})$ be an imaginary quadratic number field. 
Recall that the theory of complex multiplication developed by Hilbert and Weber
constructs  generators of  the field $\mathscr{H}(k)$ as follows. 

\bigskip
Denote by $\mathscr{E}_{\tau}=\mathbf{C}/(\mathbf{Z}+\mathbf{Z}\tau)$
an elliptic curve of the complex modulus $\tau=\sqrt{-d}$.  Notice that such curves correspond to the
case $g=1$ in formula (\ref{eq3.1}) and the lattice $\Lambda=\mathbf{Z}+\mathbf{Z}\tau$ has complex multiplication 
by the ring $O_k$ of algebraic integers of the field $k$.  Consider the $j$-invariant of   $\mathscr{E}_{\tau}$ given by the convergent power series:
 %************************************************************
\begin{equation}\label{eq4.1}
j(\tau)=\frac{1} {q}+744+196884q+\dots, \qquad\hbox{where}\quad q=e^{2\pi i\tau}. 
\end{equation}
%********************************************************

\bigskip
Roughly speaking, the main theorem of complex multiplication says 
that:
 %************************************************************
\begin{equation}\label{eq4.0}
\mathscr{H}(k)=k\left(j(\sqrt{-d})\right).
\end{equation}
%********************************************************

\bigskip
In other words, the  value of the transcendental function (\ref{eq4.1}) 
at the point $\tau=\sqrt{-d}$
is an algebraic number  generating  the Hilbert class field of   $k$.  

\bigskip
On the other hand, the  Drinfeld modules indicate  that  the generators 
of $\mathscr{H}(k)$ do not require   dramatic formulas  like   (\ref{eq4.1})
but instead  good old exponents will do.  In this section we construct such 
generators using the torsion submodules of the Drinfeld module and  Theorem \ref{thm1.1}. 
The corresponding exponential function was first obtained  in \cite{Nik0}.

\bigskip
Recall that the Drinfeld module defines explicit generators of the abelian extensions of the 
function fields (Section 2.3.3). Such modules generalize the Carlitz modules used in  construction of the 
cyclotomic function fields, and can be viewed as an analog of the exponential function $E(x)=e^{x}$.
Namely,  the torsion module $\Lambda_{\rho}[a]\in\overline{\mathbf{F}_q(C)}$ is a generator of the Hilbert class field of 
the function field $\mathbf{F}_q(C)$, where $a\in  \mathbf{F}_q(C)$ and $\rho$ is a homomorphism given by formula
(\ref{eq2.7}),  see Theorem \ref{thm2.3}.  

\bigskip
We are looking for an image of the $\Lambda_{\rho}[a]$ under the 
isomorphism $\mathbf{F}_q(C)\cong \mathscr{H}(k)$ of Theorem \ref{thm1.1} by considering  a family of the
exponential functions of the form:
%************************************************************
\begin{equation}\label{eq4.2}
E_{\mu}(x)=\mu e^{2\pi x},
\end{equation}
%********************************************************
where $\mu$ is a scaling parameter depending on the arithmetic of the ground field $k$.   
The following result solves the problem for the imaginary quadratic fields $k=\mathbf{Q}(\sqrt{-d})$. 
%**********************************************************
\begin{theorem}\label{thm4.1}
{\bf (\cite{Nik0})}
The image of the  torsion module $\Lambda_{\rho}[a]$  is given by the exponential function (\ref{eq4.2}) 
evaluated at the point $x=\sqrt{-d}$, where $\mu=\log \varepsilon$ and $\varepsilon$ is the fundamental unit
of the real quadratic number field  $\mathbf{Q}(\sqrt{d})$. In other words, 
%************************************************************
\begin{equation}\label{eq4.3}
 \mathscr{H}(k)=k\left(e^{2\pi i\sqrt{d}+\log\log\varepsilon}\right)
\end{equation}
%********************************************************
for all but a finite number of values of the square-free discriminant $d$ 
of the imaginary  quadratic field $k=\mathbf{Q}(\sqrt{-d})$. 
\end{theorem}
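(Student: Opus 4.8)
The plan is to transport the explicit generator furnished by the Drinfeld module across the isomorphism $\Phi\colon\mathbf{F}_q(C)\xrightarrow{\sim}\mathscr{H}(k)$ of Theorem~\ref{thm1.1}, and then to match the resulting algebraic number with a value of the classical exponential~(\ref{eq4.2}). First I would fix a non-zero $a$ for which the torsion module $\Lambda_{\rho}[a]$ generates the relevant abelian extension on the function-field side, as guaranteed by Theorem~\ref{thm2.3}. Since $\Lambda_{\rho}[a]\in\overline{\mathbf{F}_q(C)}$, extending $\Phi$ to the algebraic closures carries $\Lambda_{\rho}[a]$ to an algebraic number that generates $\mathscr{H}(k)$ over $k$, so the whole problem reduces to writing this image in closed form.

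The second step is to pin down the point at which the sought exponential must be evaluated. The Drinfeld module plays the role of an exponential map $e_{\rho}$ whose torsion points are the zeros of $\rho_a$; under the functorial correspondence $Jac~C\to\mathscr{H}(k)$ built from the complex multiplication of the Jacobian (Lemmas~\ref{lm3.1}--\ref{lm3.2}), this Drinfeld exponential is matched with a genuine complex exponential. The natural evaluation point is the complex modulus of the CM elliptic curve $\mathscr{E}_{\tau}$ of Section~4, namely $\tau=\sqrt{-d}$, exactly as in the classical formula $\mathscr{H}(k)=k\left(j(\sqrt{-d})\right)$ of~(\ref{eq4.0}). I would therefore evaluate the transported generator at $x=\sqrt{-d}$ in the family~(\ref{eq4.2}).

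The third and decisive step is to determine the scaling parameter $\mu$. Here I would exploit the fact that the functor $Jac~C\to\mathscr{H}(k)$ converts complex multiplication into real multiplication, so that the arithmetic normalizing the torsion point is governed by the real quadratic field $\mathbf{Q}(\sqrt{d})$ rather than by $k=\mathbf{Q}(\sqrt{-d})$. In the dimension group $(O_{\mathscr{H}(k)}, O_{\mathscr{H}(k)}^+, [u])$ attached to the Serre $C^*$-algebra, the distinguished unit $[u]$ is the fundamental unit $\varepsilon$ of $\mathbf{Q}(\sqrt{d})$, and the period of the associated real-multiplication (noncommutative) torus is $\log\varepsilon$ --- the analog of the period $2\pi i$ of the standard torus $\mathbf{C}/\mathbf{Z}$. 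Setting $\mu=\log\varepsilon$ and combining with the previous step gives $\Phi(\Lambda_{\rho}[a])=E_{\mu}(\sqrt{-d})=(\log\varepsilon)\,e^{2\pi\sqrt{-d}}=e^{2\pi i\sqrt{d}+\log\log\varepsilon}$, which is precisely~(\ref{eq4.3}).

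The hard part is exactly this identification $\mu=\log\varepsilon$: proving that the normalization carrying a Drinfeld torsion point to an algebraic generator of $\mathscr{H}(k)$ is the logarithm of the fundamental unit, and that the resulting value is algebraic over $k$. I expect the finitely many excluded discriminants to arise from degenerate situations --- small $d$ where the genus of $C$, the length $m-1$ of the blow-up tower, or the class-number bookkeeping of~(\ref{eq3.8})--(\ref{eq3.9}) fails to line up, or where $\log\varepsilon$ does not normalize the torsion point to an algebraic integer --- and I would dispose of these remaining cases by the direct computation of \cite{Nik0}.
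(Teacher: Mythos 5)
The first thing to say is that the paper itself does not prove Theorem~\ref{thm4.1}: it is imported wholesale from \cite{Nik0}, and the remark that follows it states that formula (\ref{eq4.3}) was established there by a different route, namely via Sklyanin algebras and the associated noncommutative tori, not by transporting Drinfeld torsion points across the isomorphism of Theorem~\ref{thm1.1}. Your transport narrative reproduces the \emph{motivational} discussion that precedes the theorem in Section~4 (``we are looking for an image of $\Lambda_{\rho}[a]$ under the isomorphism\dots''), but the paper never turns that narrative into an argument; it simply cites \cite{Nik0} for the conclusion.

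Taken as a standalone proof, your proposal has a genuine gap exactly where you locate it: the identification $\mu=\log\varepsilon$ and the algebraicity of $e^{2\pi i\sqrt{d}+\log\log\varepsilon}$ over $k$ \emph{are} the content of the theorem, and your justification for them is an analogy (the ``period'' of the real-multiplication torus playing the role of $2\pi i$) rather than a derivation; in the end you fall back on ``the direct computation of \cite{Nik0}'' for precisely the hard step, which is no more than what the paper does. A second, unaddressed point: Theorem~\ref{thm2.3} only gives that $k(\Lambda_{\rho}[a])$ is abelian with Galois group inside $(A/(a))^{*}$ --- these are the cyclotomic-type (generally ramified) extensions, and for $A=\mathbf{F}_q[T]$, which has class number one, the Hilbert class field of $k=\mathbf{F}_q(T)$ is trivial. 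So your opening step, ``fix $a$ for which $\Lambda_{\rho}[a]$ generates the relevant abelian extension,'' needs an argument that some torsion submodule actually cuts out the unramified extension corresponding to $\mathscr{H}(k)$ on the number-field side; neither you nor the paper supplies one. If you want a proof rather than a plausibility argument, the place to work is \cite{Nik0} and its Sklyanin-algebra machinery, not the diagram of Section~3.
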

%********************************************************
%*********************************************************
\begin{remark}
Formula (\ref{eq4.3})  was proved using the  Sklyanin algebras ([Stafford \& van den Bergh 2001] \cite[Example 8.3]{StaVdb1})
and the corresponding noncommutative tori (\cite[Section 1.1]{N}).  We refer the reader to \cite{Nik0} for the details. 
\end{remark}
%*********************************************************

\bigskip
The image of the  torsion module $\Lambda_{\rho}[a]$ for other number fields 
can be described as follows. 
%**********************************************************
\begin{conjecture}\label{cnj4.2}
Let $\theta$ be an algebraic number given by its minimal polynomial
$p(x)= x^n-a_1x^{n-1}+\dots -a_1x +a_0$, where the integers $a_i$ are non-negative. 
Denote by $\varepsilon$ a unit of the splitting field of the polynomial 
$q(x)=x^n-a_1x^{n-1}-\dots -a_1x-a_0$. 
If  $k\cong\mathbf{Q}(\theta)$ is a number field, then:
%*****************************************************************************************
\begin{equation}\label{eq4.4}
 \mathscr{H}(k)=k\left(e^{2\pi\theta+\log\log\varepsilon}\right).
\end{equation}
%*********************************************************************
\end{conjecture}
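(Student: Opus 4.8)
The plan is to follow the template of Theorem \ref{thm4.1}, replacing the genus-one machinery (the elliptic curve $\mathscr{E}_\tau$, the Sklyanin algebra \cite[Example 8.3]{StaVdb1} and the noncommutative two-torus \cite[Section 1.1]{N}) by its higher-dimensional analog, and to compute the image of the Drinfeld torsion module $\Lambda_\rho[a]$ under the isomorphism $\mathbf{F}_q(C)\cong\mathscr{H}(k)$ of Theorem \ref{thm1.1}. First I would attach to $\theta$ the Jacobian $Jac~C$ whose period lattice $\Lambda$ (as in (\ref{eq3.1}), now with $g>1$) has complex multiplication by an order generated by $\theta$ in $k\cong\mathbf{Q}(\theta)$; thus $Jac~C$ plays the role that $\mathscr{E}_\tau$ plays in the quadratic case, and the minimal polynomial $p(x)$ records the CM action. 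The companion polynomial $q(x)$, obtained from $p(x)$ by flipping the signs of the lower coefficients, should be recognized as the characteristic polynomial of the induced \emph{real} multiplication: by part (iii) of Lemma \ref{lm3.1} the functor $Jac~C\to\mathscr{H}(k)$ carries the complex multiplication of $\Lambda$ to a real multiplication on the dimension group $(O_{\mathscr{H}(k)},O_{\mathscr{H}(k)}^+,[u])$, and it is the units of the splitting field of $q(x)$ that govern this real multiplication. This is what should single out the unit $\varepsilon$ and, through the modulus of the associated noncommutative torus, the scaling parameter $\mu=\log\varepsilon$.

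The computation then proceeds in five steps. (a) Construct the higher-dimensional analog of the Sklyanin algebra whose Serre $C^*$-algebra has dimension group order-isomorphic to $(O_{\mathscr{H}(k)},O_{\mathscr{H}(k)}^+,[u])$, and identify the modulus of its noncommutative torus with the regulator datum $\log\varepsilon$ coming from $q(x)$. (b) On the function-field side, invoke Theorem \ref{thm2.3} to exhibit $\Lambda_\rho[a]$ as an explicit generator of the Hilbert class field of $\mathbf{F}_q(C)$, realized through the Drinfeld exponential. (c) Transport $\Lambda_\rho[a]$ across the isomorphism of Theorem \ref{thm1.1} — equivalently, the ring isomorphism of Lemma \ref{lm3.2} iterated up the tower of Lemma \ref{lm3.4} — so that the Drinfeld exponential becomes an ordinary complex exponential rescaled by $\mu$. (d) Evaluate the resulting function $E_\mu(x)=\mu e^{2\pi x}$ at $x=\theta$: since $\mu=\log\varepsilon$ one obtains $E_\mu(\theta)=(\log\varepsilon)e^{2\pi\theta}=e^{2\pi\theta+\log\log\varepsilon}$, which is exactly the generator appearing in (\ref{eq4.4}). (e) Verify that this element generates $\mathscr{H}(k)$ over $k$ by pushing the Galois description of Theorem \ref{thm2.3} through the isomorphism and matching it against the Galois group of $\mathscr{H}(k)/k$.

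The main obstacle will be step (a) together with the precise identification of $\mu$ in step (d). For $\theta$ of degree $n>2$ the relevant abelian variety has dimension $g>1$, and the two-torus argument underlying Theorem \ref{thm4.1} does not transfer verbatim: one must produce the correct higher-dimensional noncommutative torus and prove that its modulus equals $\log\varepsilon$ rather than some other combination of the logarithms of the fundamental units, since the unit group of the splitting field of $q(x)$ generally has rank greater than one. Showing that a single unit $\varepsilon$, and not the full regulator lattice, controls the image is the crux, and I expect it to force the same kind of exceptional set as in Theorem \ref{thm4.1}, so that (\ref{eq4.4}) should be provable only ``for all but finitely many'' $\theta$; here the complex-to-real multiplication rigidity of Lemma \ref{lm3.1} is what I would lean on to pin $\mu$ down.
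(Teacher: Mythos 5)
The statement you are trying to prove is presented in the paper as Conjecture \ref{cnj4.2} and is \emph{not proved} there: the author only points to \cite[pp. 187-188]{N} for motivation and checks consistency with Theorem \ref{thm4.1} in the quadratic case $p(x)=x^2+d$. So there is no proof in the paper to compare against, and your proposal does not supply one either; it is a plan whose decisive steps are exactly the open content of the conjecture. Step (a) --- producing a higher-dimensional analog of the Sklyanin algebra whose associated noncommutative torus has modulus $\log\varepsilon$ --- is asserted, not constructed; nothing in the paper, nor in \cite{Nik0} (which treats only $g=1$ and imaginary quadratic $k$), gives such a construction for $n>2$, and your step (d) evaluation $E_\mu(\theta)=(\log\varepsilon)e^{2\pi\theta}$ only makes sense once (a) is in hand.

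The crux you correctly identify --- that for $n>2$ the unit group of the splitting field of $q(x)$ generically has rank greater than one, so one must explain why a single unit $\varepsilon$ rather than the full regulator lattice controls the scaling --- is precisely where the conjecture lives, and ``leaning on the complex-to-real rigidity of Lemma \ref{lm3.1}'' is not an argument: part (iii) of that lemma only assigns some $\alpha\in O_{\mathscr{H}(k)}$ to each isogeny and says nothing about units of the splitting field of $q(x)$, a field that never appears in Section 3. Note also that the target statement itself is underdetermined before you can prove it: the conjecture says ``a unit'' without specifying which (Theorem \ref{thm4.1} takes the fundamental unit of $\mathbf{Q}(\sqrt{d})$ in the $n=2$ case), and the displayed $p(x)=x^n-a_1x^{n-1}+\dots-a_1x+a_0$ repeats the coefficient $a_1$, so even pinning down the class of $\theta$ to which the claim applies requires interpretation. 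In short, you have faithfully reproduced the heuristic that motivates the conjecture and honestly flagged the genuine obstruction, but you have not removed it; the statement remains unproved.
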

%********************************************************

\bigskip
%*********************************************************
\begin{remark}
We refer the reader to \cite[pp. 187-188]{N} for the motivation of Conjecture \ref{cnj4.2}.
\end{remark}
%*********************************************************

\bigskip
%*********************************************************
\begin{example}
Let $p(x)=x^2+d$, where $d>0$ is a square-free integer. One gets $\theta=i\sqrt{d}$
and the polynomial $q(x)=x^2-d$ has the real quadratic splitting field $\mathbf{Q}(\sqrt{d})$. 
It is easy to see, that in this case (\ref{eq4.4}) coincides with the formula (\ref{eq4.3}).   
\end{example}
%*********************************************************

\bibliographystyle{amsplain}

\begin{thebibliography}{99}



\bibitem{ArtWap1}
 E.~Artin and G.~ Whaples, 
 \textit{Axiomatic characterization of fields by the product formula for valuations,} 
 Bull. Amer. Math. Soc. {\bf 51} (1945), 469-492. 

%\bibitem{B}
%A.~Beauville, \textit{Complex Algebraic Surfaces},   London Mathematical Society Student
%Texts  {\bf 34}, 1996. 

\bibitem{B}
 B.~Blackadar, \textit{$K$-Theory for Operator Algebras}, MSRI Publications,
 Springer, 1986.


\bibitem{DedWeb1}
 R.~Dedekind and H.~Weber,
 \textit{Theorie der algebraischen Functionen einer Ver\"anderlichen},  
 J. Reine Angew. Math. {\bf 92}  (1882), 181-290.


\bibitem{GMS}
G. van der Geer, B.~Moonen and R.~Schoof (Eds), 
\textit{Number Fields and Function Fields -- Two Parallel Worlds},
Progress in Mathematics {\bf 239},  Birkh\"auser,  2005. 

\bibitem{Gra1}
A. Grassi, 
\textit{Birational geometry old and new},
Bull. Amer. Math. Soc. {\bf 46} (2009), 99-123. 


\bibitem{K}
J.~Koll\'ar,  \textit{Lectures on Resolution of Singularities},
Annals of Mathematics Studies {\bf 166}, Princeton, 2007.  

\bibitem{Nik0}
I.~V.~Nikolaev \textit{On algebraic values of function $\exp~(2\pi ix +\log\log y)$},  Ramanujan J. {\bf 47} (2018), 417–425.



\bibitem{N}
I.~V.~Nikolaev, \textit{Noncommutative Geometry}, Second Edition,
De Gruyter Studies in Math. {\bf 66}, Berlin, 2022.


\bibitem{Nik1}
I.~V.~Nikolaev, \textit{Birational geometry of quaternions}, 
arXiv:2212.06555 


\bibitem{R}
M.~Rosen, \textit{Number Theory in Function Flelds},
GTM {\bf 210}, Springer,  2002. 

\bibitem{StaVdb1}
J.~T.~Stafford and M.~van ~den ~Bergh, \textit{Noncommutative curves and noncommutative
surfaces}, Bull. Amer. Math. Soc. {\bf 38} (2001), 171-216. 


\end{thebibliography}

%**********************************************************

\end{document}